%
%
%
\documentclass[12pt]{amsart}
\usepackage{amssymb,latexsym,amsmath,amscd,amsthm,graphicx, color}
\usepackage[all]{xy}
\usepackage{pgf,tikz}
\usepackage{mathrsfs}
\usepackage{graphicx}
\usepackage{epstopdf}
\usepackage{breqn}
\usepackage[numbers,sort&compress]{natbib}
\usepackage{amsmath}
\usepackage{xcolor}
\usepackage{fancyhdr}
\usetikzlibrary{arrows}
\newcommand{\quotes}[1]{``#1''}
\definecolor{uuuuuu}{rgb}{0.26666666666666666,0.26666666666666666,0.26666666666666666}
\definecolor{xdxdff}{rgb}{0.49019607843137253,0.49019607843137253,1.}
\definecolor{ffqqqq}{rgb}{1.,0.,0.}

\setlength{\oddsidemargin}{0 in} \setlength{\evensidemargin}{0 in}
\setlength{\textwidth}{6.75 in} \setlength{\topmargin}{-.6 in}
\setlength{\headheight}{.00 in} \setlength{\headsep}{.3 in }
\setlength{\textheight}{10 in} \setlength{\footskip}{0 in}

\newtheorem{theorem}{Theorem}[section]
\newtheorem{lemma}{Lemma}[section]

\newtheorem{remark}{Remark}[section]
\newtheorem{definition}[theorem]{Definition}

\newtheorem{rule-def}[theorem]{Rule}
\numberwithin{equation}{section}

\numberwithin{equation}{section}
\usepackage{hyperref}
\usepackage{xcolor}
\hypersetup{
  colorlinks   = true,
  linkcolor    = blue,
  citecolor   = blue,
  urlcolor   = blue,
  pdftitle={SSS }
}



\begin{document}
\title[]{Boundedness of $p$-adic Hardy-Hilbert type integral operator on Block spaces }

\author{Salman Ashraf}
\address{School of Mathematical and Statistical Sciences, Indian Institute of Technology Mandi, Kamand (H.P.) - 175005, India}
\email{ashrafsalman869@gmail.com}

\keywords{$p$-adic field, $p$-adic Hardy-Hilbert type integral operator, Block space, Morrey space.}

\begin{abstract}
In this paper, we estimate an operator norm of dilation operators on block spaces ($\mathfrak{B}_{r,\alpha}(\mathbb{Q}_p)$) over $p$-adic field. With this estimate, we establish boundedness of $p$-adic Hardy-Hilbert type integral operator on $\mathfrak{B}_{r,\alpha}(\mathbb{Q}_p)$. Moreover as application to our result, we obtain the $p$-adic Hilbert inequality, $p$-adic Hardy inequality and $p$-adic Hardy-Littlewood-P\'olya inequality on $\mathfrak{B}_{r,\alpha}(\mathbb{Q}_p)$. 
\end{abstract}

\maketitle

\section{Introduction}

Classical block space is a generalization of Lebesgue space. Zorko \cite{Z} introduced the block spaces, and proved that block space is the predual of the classical Morrey spaces. Since the blocks considered by Zorko \cite{Z} has mean zero, therefore later in \cite{1}, the authors prove that mean zero condition on blocks can be omitted and still have a description of the block spaces as a predual of Morrey spaces. The reader is referred to \cite{AA1,YN}, for some recent developments of Morrey spaces and their related function spaces on $\mathbb{R}^n.$

The Hardy-Littlewood-P\'olya inequality for Lebesgue space $L^q(\mathbb{R}_+)$ were established in \cite{HA}, which unify several important results in analysis such as the Hardy inequality and the Hilbert inequality. Also, the Hardy-Littlewood-P\'olya inequality for block spaces on $\mathbb{R}_+$ were established in \cite{HK3}. One can refer to \cite{ED,KR,YA,YA1} for detail study of Hardy inequality and Hilbert inequality and it's related topics. 

The main aim of this paper is to establish boundedness of $p$-adic Hardy-Hilbert type integral operator on block spaces over $p$-adic field. As consequences, we establish $p$-adic Hardy inequality, $p$-adic Hilbert inequality and $p$-adic Hardy-Littlewood-P\'olya inequality for block spaces over $p$-adic field.  In \cite{3}, K. P. Ho introduced and discussed some fundamental properties of block spaces over locally compact Vilenkin group. He also obtained the boundedness of the Hardy-Littlewood maximal function on block spaces over locally compact Vilenkin group. Since additive group of $p$-adic field is an example of locally compact Vilenkin group, therefore in section 2, we define the block spaces over $p$-adic field by the idea in \cite{3,L1}.

In 2020, Huabing Li and Jianjun Jin \cite{4} introduced and studied the $p$-adic Hardy-Hilbert type integral operator. It should be pointed out that the $p$-adic Hardy-Hilbert type integral operator includes many classical operators in $p$-adic Harmonic analysis such as the $p$-adic Hardy operator, $p$-adic Hilbert operator and  $p$-adic Hardy-Littlewood-P\'olya operator. Let us now recall the definition of $p$-adic Hardy-Hilbert type integral operator.

Let $f$ be a nonnegative integrable function on $\mathbb{Q}_p,$ and $\mathcal{K} : \mathbb{R}_+ \times \mathbb{R}_+ \rightarrow [0, \infty)$ be a homogeneous function of degree $-1,$ that is, $\mathcal{K}(\xi x,\xi y) = \xi^{-1} \mathcal{K}(x,y)$ for any $\xi >0.$ Then $p$-adic Hardy-Hilbert type integral operator with the kernel $\mathcal{K}$ is defined by 
\begin{align} \label{eq3}
\mathscr{T}^pf(x) = \int\limits_{\mathbb{Q}^*_p} \mathcal{K}(|x|_p,|y|_p)f(y)dy,~~~~~ x \in \mathbb{Q}^*_p. 
\end{align} 
Let us now see that for some special case of the kernel $\mathcal{K},~\mathscr{T}^p$ reduces to some important operators
in $p$-adic analysis.
\begin{itemize}
\item[(\romannumeral 1)] Let us choose 
\begin{align*}
\mathcal{K}(|x|_p,|y|_p) = \dfrac{1}{|x|_p~+~|y|_p},
\end{align*}
then we have $p$-adic Hilbert operator defined by 
\begin{align} \label{Hilbert}
H^pf(x) = \int\limits_{\mathbb{Q}^*_p} \dfrac{f(y)}{|x|_p~+~|y|_p}dy,~~~~~~~~~ x \in \mathbb{Q}^*_p.
\end{align}
\item[(\romannumeral 2)] Let us choose 
\begin{align*}
\mathcal{K}(|x|_p,|y|_p) = |x|_p^{-1} \Phi_E(|y|_p),
\end{align*}
where $\Phi_E$ is the characteristic function of $E = \{ y \in \mathbb{Q}_p : |y|_p \leq |x|_p\},$ then we have $p$-adic Hardy operator
\begin{align} \label{Hardy}
\mathcal{H}^pf(x) = \dfrac{1}{|x|_p}~\int\limits_{|y|_p \leq |x|_p} f(y)dy,~~~~~~~~~~~ x \in \mathbb{Q}^*_p.
\end{align} 
\item[(\romannumeral 3)] By choosing 
\begin{align*}
\mathcal{K}(|x|_p,|y|_p) = \dfrac{(|x|_p|y|_p)^{\frac{\lambda}{2}}}{\text{max}\{|x|_p,|y|_p\}^{\lambda+1}},~~~~\lambda \geq 0,
\end{align*}
we obtain the operator
\begin{align} \label{polya}
\mathscr{D}^pf(x) = \int\limits_{\mathbb{Q}^*_p}\dfrac{(|x|_p|y|_p)^{\frac{\lambda}{2}}}{\text{max}\{|x|_p,|y|_p\}^{\lambda+1}} f(y)dy,~~~~~ ~~~~~~x \in \mathbb{Q}^*_p. 
\end{align} 
Observe that for $\lambda =0,~\mathscr{D}^p$ reduces to $p$-adic Hardy-Littlewood-P\'{o}lya operator defined by
\begin{align*}
\mathscr{P}^pf(x) = \int\limits_{\mathbb{Q}^*_p} \dfrac{f(y)}{\text{max}\{|x|_p,|y|_p\}}dy,~~~~~~~~~ x \in \mathbb{Q}^*_p. \nonumber
\end{align*} 
\end{itemize}

In \cite{5}, K. H. Dung and D. V. Duong  gave the necessary and sufficient condition for the boundedness of $p$-adic Hardy-Hilbert type integral operator on two weighted Morrey spaces and Morrey-Herz spaces. Also, in \cite{9}, they established the boundedness of $\mathscr{T}^p$ on weighted Triebel–Lizorkin space. Recently, Batbold, Sawano and Tumendemberel \cite{10} introduced $m$-linear $p$-adic integral operator which is similar to certain multilinear integral operators on Euclidean spaces (see \cite{11}) . More precisely, Let  $K : \mathbb{R}^m_+  \rightarrow [0, \infty)$ be a homogeneous function of degree $-m,$ that is, $K(\xi x_1,\xi x_2,\cdots,\xi x_{m+1}) = \xi^{-m} K( x_1, x_2,\cdots, x_{m+1})$ for any $\xi >0.$ Then $m$-linear $p$-adic integral operator with the kernel $K$ is defined by
\begin{align*}
T_m^p(f_1,f_2,\cdots,f_m)(x) = \int\limits_{(\mathbb{Q}^*_p)^m} K(|x|_p,|y_1|_p,\cdots,|y_m|_p)\prod\limits_{i=1}^{m}f_i(y_i)dy_1dy_2\cdots dy_m,~~~~~ x \in \mathbb{Q}^*_p. 
\end{align*} 

Batbold, Sawano and Tumendemberel \cite{10} gave necessary and sufficient conditions for the boundedness of $m$-linear $p$-adic integral operator on $p$-adic Lebesgue spaces and Morrey spaces with power weights. In \cite{7}, Duong and Hong obtain the boundedness of $m$-linear $p$-adic integral operator on two weighted Herz spaces. As an application to their result, they obtain the boundedness of $p$-adic multilinear Hilbert operator, $p$-adic multilinear Hardy operator and $p$-adic multilinear Hardy–Littlewood–P\'olya operator on two-weighted Herz spaces.

A local field is a locally compact, totally disconnected, non-Archimedian norm valued and non-discrete topological field, see \cite{book_impulse6} for
basic Fourier analysis on local fields. The basic archetypes of local fields are the $p$-adic field $\mathbb{Q}_p$, and a field of formal Laurent series
$\mathbb{F}_q((t))$ over the finite field with $q$ elements. In recent years, the study of Harmonic and Wavelet analysis on local fields has received a lot of attention (see \cite{bib1,BJ1,BJ2,CH4,CH5,R} and references therein).

The study of operators on local fields, is quite new and lots of new topics are worth to explore. For further information on boundedness of some fundamental operators in Harmonic analysis, like maximal operator, singular integral operator, dilation operators, Hardy operator, Hardy–Ces\'aro operator and Hausdorff operator on function spaces over local fields (see \cite{QJ1,QJ2,CH,CH2,CH3,5,9,MB,SI,FU,new4,new8,new3}).

This paper is organized as follows. In section 2, we provide a brief introduction to $p$-adic analysis as well as definition of block spaces over $p$-adic field. In section 3, we estimate an operator norm of dilation operator on block spaces over $p$-adic field. With this estimate, we establish boundedness of $p$-adic Hardy-Hilbert type integral operator on block spaces over $p$-adic field. Finally as an application of the boundedness of $p$-adic Hardy-Hilbert type integral operator, we obtain the $p$-adic Hilbert inequality, $p$-adic Hardy inequality and $p$-adic Hardy-Littlewood-P\'olya inequality for block spaces over $p$-adic field.

\section{Preliminaries}
\subsection{The field of $p$-adic numbers ($\mathbb{Q}_p$)} Let $p$ be any fixed prime in $\mathbb{Z}.$ Define the $p$-adic absolute value (or $p$-adic norm) $|\cdot|_p$ on $\mathbb{Q}$ by  
\begin{align*}
|x|_p = \begin{cases} p^{-\gamma}~ & \text{if}~~ x = p^{\gamma} \frac{m}{n} \\
0~ & \text{if}~~ x = 0, \\ \end{cases}
\end{align*}

where $\gamma,m,n \in \mathbb{Z}$ and $m,n$ are not divisible by $p.$ The field of $p$-adic numbers, denote by $\mathbb{Q}_p,$ is the completion of the field of rational numbers $\mathbb{Q}$ with respect to the metric $d_p(x,y) = |x-y|_p.$ It is easy to see that $p$-adic absolute value satisfy the following properties: 

\begin{itemize}
\item[(a)] $|xy|_p= |x|_p|y|_p$ for all $x,y \in \mathbb{Q}_p$;
\item[(b)] $|x+y|_p \leq \text{max} \{|x|_p, |y|_p\},$  for all $x,y \in \mathbb{Q}_p$,
\end{itemize}

The property (b) is called the \textit{ultrametric inequality(or the non-Archimedean property)}. It follows that
\begin{equation*}
|x+y|_p = \text{max} \{|x|_p, |y|_p\}~ \text{if}~ |x|_p \neq |y|_p.\\
\end{equation*} 

$\mathbb{Q}_p$ with natural operations and topology induced by the
metric $d_p$ is a locally compact, non-discrete, complete and totally disconnected field. It is also well known that any non-zero $p$-adic number $x \in \mathbb{Q}_p$ can be uniquely represented in the canonical series
\begin{align} \label{s1}
x =p^\gamma\sum\limits_{l=0}^{\infty}c_lp^l,
\end{align}

where $c_l \in \mathbb{Z}/p\mathbb{Z}$ and $c_0 \neq 0.$ The series (\ref{s1}) is convergence in $p$-adic norm since $|c_lp^l|_p \leq p^{-l}.$ For $ a \in \mathbb{Q}_p$ and $k \in \mathbb{Z},$ we denote by 
\begin{align*}
B^k(a) = \{ x \in \mathbb{Q}_p : |x-a|_p \leq p^k\}, \\
S^k(a) = \{ x \in \mathbb{Q}_p : |x-a|_p = p^k\},
\end{align*}
respectively, a ball and a sphere of radius $p^k$ and center at $a.$ We use the notations $B^k =B^k(0)$ and $S^k=S^k(0).$  The set $\{B^k \subset \mathbb{Q}_p: k \in \mathbb{Z}\}$ satisfies the following:

\begin{itemize}
\item[(\romannumeral 1)] $\{B^k \subset \mathbb{Q}_p : k \in \mathbb{Z}\}$ is a base for neighborhood system of identity in $\mathbb{Q}_p$, and $B^{k} \subset B^{k+1},~ k \in \mathbb{Z};$
\item[(\romannumeral 2)] $B^k ,~ k \in \mathbb{Z},$ is open, closed and compact in $\mathbb{Q}_p$;
\item[(\romannumeral 3)] $\mathbb{Q}_p = \bigcup\limits_{{k=-\infty}}^{+\infty} B^k~~\text{and}~~\{0\}=\bigcap\limits_{{k=-\infty}}^{+\infty} B^k.$
\end{itemize}  
We also have, 
\begin{align*}
\mathbb{Q}^*_p = \mathbb{Q}_p \setminus  \{0\} = \bigcup\limits_{{k=-\infty}}^{+\infty} S^k.
\end{align*}

Since additive group of $\mathbb{Q}_p$ is a locally compact Abelian group, we choose a Haar measure $dx$ on additive group of $\mathbb{Q}_p$ normalized so that

\begin{align*}
|B^0| = \int_{B^0} dx =1,
\end{align*} 

where $|E|$ denotes the Haar measure of a measurable set $E$ of $\mathbb{Q}_p.$ Then by a simple calculation, the Haar measures of any balls and spheres can be obtained. Especially, we frequently use

\begin{align*}
|B^k| = p^{k}~~~~\text{and}~~~~|S^k| = p^k(1-p^{-1}).\\
\end{align*}

\begin{definition}
Let $S(\mathbb{Q}_p)$ be the space of complex-valued, locally constant and compactly supported functions on $\mathbb{Q}_p.$ The space $S(\mathbb{Q}_p)$ is called the \textit{Schwartz space} over $\mathbb{Q}_p,$ has the following properties:
\begin{itemize}
\item[(1)] If $\phi \in S(\mathbb{Q}_p),$ then $\phi$ is continuous.
\item[(2)] If $\phi \in S(\mathbb{Q}_p),$ then there exists $k \in \mathbb{Z}$ such that $\text{supp}~ \phi \subset B^k.$
\item[(3)] If $\phi \in S(\mathbb{Q}_p),$ then there exists $l \in \mathbb{Z}$ such that $\phi$ is constant on the cosets of $B^l.$
\item[(4)] $S(\mathbb{Q}_p)$ is dense in $L^r(\mathbb{Q}_p),~1 \leq r < \infty.$
\end{itemize}
\end{definition} 

We refer to \cite{book_impulse6,6} for details of $p$-adic field and proof of statements discussed in this subsection.

\subsection{Block spaces} We begin with definition of $p$-adic central Morrey spaces (see \cite{CH6,NM}). 

\begin{definition}
Let $\alpha$ be a non-negative real number and $1 \leq r < \infty.$ Then the $p$-adic central Morrey space is defined by
\begin{align*}
M_{r,\alpha}(\mathbb{Q}_p)=\{f \in L_{\text{loc}}^r(\mathbb{Q}_p)~ : ~ \|f\|_{M_{r,\alpha}(\mathbb{Q}_p)} < \infty \}, 
\end{align*}
where 
\begin{align*}
\|f\|_{M_{r,\alpha}(\mathbb{Q}_p)} = \Bigg(\sup_{k \in \mathbb{Z}}~ \dfrac{1}{|B^k|^{\alpha r}} \int_{B^k}|f(x)|^r dx\Bigg)^{1/r}. 
\end{align*}
\end{definition}

It is clear that $M_{r, 0}(\mathbb{Q}_p) = L^r(\mathbb{Q}_p).$ 

\begin{definition} Let $\alpha \in \mathbb{R}$  and $0 < r < \infty.$ A function $a: \mathbb{Q}_p \rightarrow \mathbb{C}$ is said to be a central $(r, \alpha)$- block if there exist $n \in \mathbb{Z}$  such that $\text{supp}~ a \subset B^n$ and satisfies 

\begin{align*}
\|a\|_{L^r(\mathbb{Q}_p)}~ \leq~ |B^n|^{- \alpha}. \\
\end{align*} 

The block space $\mathfrak{B}_{r,\alpha}(\mathbb{Q}_p)$ generated by central $(r, \alpha)$-blocks are defined by 

\begin{align*}
\mathfrak{B}_{r,\alpha}(\mathbb{Q}_p)=\{f \in L_{\text{loc}}^r(\mathbb{Q}_p) : f = \sum_{k=1}^{\infty} \lambda_k a_k,~\sum_{k=1}^{\infty} |\lambda_k| < \infty~\text{and each}~a_k~\text{is a central}~(r, \alpha)\text{-block}\}.
\end{align*}

The block space $\mathfrak{B}_{r,\alpha}(\mathbb{Q}_p)$  is endowed with the norm 

\begin{align*}
\|f\|_{\mathfrak{B}_{r,\alpha}(\mathbb{Q}_p)} = \inf \bigg\{\sum_{k=1}^{\infty} |\lambda_k| ~: ~ f = \sum_{k=1}^{\infty} \lambda_k a_k \bigg\}, \\
\end{align*}

where the infimum is taken over all such decompositions of $f.$
\end{definition}

According to the definition of $\mathfrak{B}_{r,\alpha}(\mathbb{Q}_p),$ for any $\text{central}~(r, \alpha)\text{-block}~b,$ we have
\begin{align*}
\|b\|_{\mathfrak{B}_{r,\alpha}(\mathbb{Q}_p)} \leq 1.
\end{align*}

For $r \in  (1, \infty),$ let $r^\prime$ be the conjugate of $r.$ We have H\"{o}lder's inequality for $M_{r,\alpha}(\mathbb{Q}_p)$ and $\mathfrak{B}_{r,\alpha}(\mathbb{Q}_p)$ \citep[Lemma 3.2]{4}.

\begin{lemma}
Let $1 < r < \infty$ and $\alpha > 0.$ If $f \in M_{r,\alpha}(\mathbb{Q}_p)$ and $g \in \mathfrak{B}_{r^\prime,\alpha}(\mathbb{Q}_p)$ then 

\begin{align*}
\int_{\mathbb{Q}_p} |f(x)g(x)|dx \leq  \|f \|_{M_{r,\alpha}(\mathbb{Q}_p)} \|g \|_{ \mathfrak{B}_{r^\prime,\alpha}(\mathbb{Q}_p)}  .
\end{align*}
\end{lemma}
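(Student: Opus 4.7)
The plan is to reduce the inequality to its analogue for a single block via the atomic decomposition and then apply classical Hölder.

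First, I would fix an arbitrary decomposition $g=\sum_{k=1}^{\infty}\lambda_k a_k$ of $g\in\mathfrak{B}_{r',\alpha}(\mathbb{Q}_p)$ into central $(r',\alpha)$-blocks, where each $a_k$ is supported in some ball $B^{n_k}$ with $\|a_k\|_{L^{r'}(\mathbb{Q}_p)}\leq |B^{n_k}|^{-\alpha}$. The goal is to prove the block-wise estimate
\[
\int_{\mathbb{Q}_p}|f(x)a_k(x)|\,dx \leq \|f\|_{M_{r,\alpha}(\mathbb{Q}_p)},
\]
which is the heart of the argument. Since $\operatorname{supp} a_k\subset B^{n_k}$, the integral equals $\int_{B^{n_k}}|f(x)a_k(x)|\,dx$, and the ordinary Hölder inequality on $L^r\times L^{r'}$ gives
\[
\int_{B^{n_k}}|f(x)a_k(x)|\,dx \leq \Bigl(\int_{B^{n_k}}|f(x)|^r\,dx\Bigr)^{1/r}\|a_k\|_{L^{r'}(\mathbb{Q}_p)}.
\]
From the definition of the Morrey norm, $\bigl(\int_{B^{n_k}}|f|^r\bigr)^{1/r}\leq |B^{n_k}|^{\alpha}\|f\|_{M_{r,\alpha}(\mathbb{Q}_p)}$, and the block condition gives $\|a_k\|_{L^{r'}(\mathbb{Q}_p)}\leq |B^{n_k}|^{-\alpha}$, so the two factors of $|B^{n_k}|^{\pm\alpha}$ cancel and deliver the block-wise estimate.

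Next, I would pass from the block-wise estimate to $g$. Summing after multiplying by $|\lambda_k|$ and using the triangle inequality (with monotone convergence or Fubini--Tonelli to interchange sum and integral, justified by nonnegativity of $|f a_k|$) yields
\[
\int_{\mathbb{Q}_p}|f(x)g(x)|\,dx \leq \sum_{k=1}^{\infty}|\lambda_k|\int_{\mathbb{Q}_p}|f(x)a_k(x)|\,dx \leq \|f\|_{M_{r,\alpha}(\mathbb{Q}_p)}\sum_{k=1}^{\infty}|\lambda_k|.
\]
Finally, taking the infimum over all admissible decompositions of $g$ replaces $\sum|\lambda_k|$ by $\|g\|_{\mathfrak{B}_{r',\alpha}(\mathbb{Q}_p)}$, completing the proof.

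The main obstacle is not deep: it is really the bookkeeping at the interchange of sum and integral, together with ensuring that $\sum|\lambda_k||f a_k|$ is genuinely absolutely summable in $L^1$, which is immediate from the block-wise bound. Everything else is a direct combination of the Morrey/block definitions with classical Hölder.
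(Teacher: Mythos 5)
Your proof is correct. Note that the paper does not actually prove this lemma --- it is quoted with a citation to Lemma~3.2 of Li and Jin \cite{4} --- but your argument (block-wise H\"older on the supporting ball $B^{n_k}$, where the factors $|B^{n_k}|^{\alpha}$ from the Morrey norm and $|B^{n_k}|^{-\alpha}$ from the block condition cancel, followed by summation and the infimum over decompositions) is exactly the standard proof one finds in that reference, and the bookkeeping you flag (a.e.\ domination of $|g|$ by $\sum_k|\lambda_k||a_k|$ plus Tonelli) is handled correctly.
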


\subsection{Subspace of Morrey spaces}

Zorko \cite{Z} proved that set of continuous functions with compact support ($C_0(\mathbb{R}^n)$) is not dense in Morrey spaces ($L^{p,\lambda}(\mathbb{R}^n)$) and she introduced an important subset of $L^{p,\lambda}(\mathbb{R}^n)$ so-called \textit{Zorko subspace} $L_0^{p,\lambda}(\mathbb{R}^n),$ which is defined as the closure of $C_0(\mathbb{R}^n)$ in the $L^{p,\lambda}(\mathbb{R}^n)$ norm. Adams and Xiao \cite{AA} stated that $L_0^{p,\lambda}(\mathbb{R}^n)$ is the predual of block spaces ($H^{p^\prime,\lambda}(\mathbb{R}^n)$) and the three spaces $L_0^{p,\lambda}(\mathbb{R}^n)-H^{p^\prime,\lambda}(\mathbb{R}^n)-L^{p,\lambda}(\mathbb{R}^n)$ are similar to $VMO(\mathbb{R}^n)-H^1(\mathbb{R}^n)-BMO(\mathbb{R}^n)$(see \cite{AA1}). In \cite{IE}, Izumi, Sato and Yabuta considered Morrey spaces on unit circle \textbf{T} and proved in detail that $L_0^{p,\lambda}(\textbf{T})$ is the predual of block spaces.

Motivated by above work, for $1 < r < \infty$ and $\alpha > 0,$ we consider the function 
\begin{align*}
f(x) := \begin{cases}
 |x|_p^{\frac{\alpha - 1}{r}} , & ~~~ |x|_p ~\leq 1, \\
  0 , &~~~ |x|_p~ > 1. 
\end{cases}
\end{align*}

Then $ f \in M_{r,\alpha}(\mathbb{Q}_p)$ and for any $g \in S(\mathbb{Q}_p),$ we get $c > 0$ such that
\begin{align*}
\Bigg(\sup_{k \in \mathbb{Z}}~ \dfrac{1}{|B^k|^{\alpha r}} \int_{B^k}|f(x)- g(x)|^r dx\Bigg)^{1/r} \geq c >0.
\end{align*}

Hence, functions in $M_{r,\alpha}(\mathbb{Q}_p)$ cannot be approximated by functions in $S(\mathbb{Q}_p).$ In particular, $S(\mathbb{Q}_p)$ is not dense in $M_{r,\alpha}(\mathbb{Q}_p),$ and therefore we define $\widetilde{M}_{r,\alpha}(\mathbb{Q}_p)$ as the closure of $S(\mathbb{Q}_p)$ in $M_{r,\alpha}(\mathbb{Q}_p).$

Analogous to the classical case, one could expect the following:
\begin{equation} \label{clo}
\widetilde{M}_{r,\alpha}(\mathbb{Q}_p) ~ \overset{\ast}{\longrightarrow} ~ \mathfrak{B}_{r^\prime,\alpha}(\mathbb{Q}_p) \overset{\ast}{\longrightarrow}  ~ M_{r,\alpha}(\mathbb{Q}_p),
\end{equation}

and the spaces $\widetilde{M}_{r,\alpha}(\mathbb{Q}_p) - \mathfrak{B}_{r^\prime,\alpha}(\mathbb{Q}_p) - M_{r,\alpha}(\mathbb{Q}_p)$ have a relationship alike to  $VMO(\mathbb{Q}_p) - H^1(\mathbb{Q}_p)  -BMO(\mathbb{Q}_p).$ The reader might find the papers \cite{MO,BMO,Kim}, where the spaces $ H^1(\mathbb{Q}_p), ~BMO(\mathbb{Q}_p)$ and $VMO(\mathbb{Q}_p)$ are studied and it is also pointed out that $BMO(\mathbb{Q}_p)$ can be characterize as the dual of the space $H^1(\mathbb{Q}_p).$

The following theorem establish Minkowski's integral inequality for block spaces $\mathfrak{B}_{r,\alpha}(\mathbb{Q}_p).$

\begin{theorem} \label{Min} Let $1 < r < \infty$ and $ \alpha > 0.$ Let $F$ be a function on the product space $\mathbb{Q}_p \times \mathbb{Q}_p$ such that $\|F(\cdot, y)\|_{\mathfrak{B}_{r,\alpha}(\mathbb{Q}_p)} \in L^1(\mathbb{Q}_p)$ then we have
\begin{align} \label{kow}
\bigg\|\int\limits_{\mathbb{Q}_p}F(\cdot, y)dy\bigg\|_{\mathfrak{B}_{r,\alpha}(\mathbb{Q}_p)}  \leq \int\limits_{\mathbb{Q}_p} \|F(\cdot, y)\|_{\mathfrak{B}_{r,\alpha}(\mathbb{Q}_p)}dy.
\end{align}
\end{theorem}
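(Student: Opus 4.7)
The plan is to prove the inequality by combining the Hölder-type pairing of Lemma 2.1 with Fubini's theorem through the predual duality. Since $\mathfrak{B}_{r,\alpha}(\mathbb{Q}_p)$ is the predual of $M_{r',\alpha}(\mathbb{Q}_p)$ in the $p$-adic setting (the analogue of Zorko's classical theorem, compatible with the bilinear pairing of Lemma 2.1), the block norm admits the dual representation
\[
\|g\|_{\mathfrak{B}_{r,\alpha}(\mathbb{Q}_p)} = \sup\Bigl\{\Bigl|\int_{\mathbb{Q}_p} g(x) h(x)\, dx\Bigr| : \|h\|_{M_{r',\alpha}(\mathbb{Q}_p)} \leq 1\Bigr\}.
\]

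First I would fix such an $h$ with $\|h\|_{M_{r',\alpha}(\mathbb{Q}_p)} \leq 1$ and test $\int_{\mathbb{Q}_p} F(\cdot, y)\, dy$ against $h$. The hypothesis $\|F(\cdot, y)\|_{\mathfrak{B}_{r,\alpha}(\mathbb{Q}_p)} \in L^1(\mathbb{Q}_p)$ combined with Lemma 2.1 yields
\[
\int_{\mathbb{Q}_p}\!\!\int_{\mathbb{Q}_p} |F(x,y) h(x)|\, dx\, dy \leq \int_{\mathbb{Q}_p} \|F(\cdot, y)\|_{\mathfrak{B}_{r,\alpha}(\mathbb{Q}_p)}\, dy < \infty,
\]
so Fubini's theorem permits interchanging the order of integration. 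Applying Lemma 2.1 once more to the inner integral in $x$ gives
\[
\Bigl|\int_{\mathbb{Q}_p} F(x,y) h(x)\, dx\Bigr| \leq \|F(\cdot, y)\|_{\mathfrak{B}_{r,\alpha}(\mathbb{Q}_p)},
\]
and integrating this bound in $y$, then taking the supremum over all admissible $h$ via the dual representation, delivers \eqref{kow}.

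The main obstacle is the predual identification invoked in the first step, which is the $p$-adic counterpart of Zorko's result and is not explicitly established in the preceding excerpt. If a self-contained route is preferred, I would instead first verify the inequality for $y$-simple functions $F(x,y) = \sum_{j=1}^{N} F_j(x) \chi_{E_j}(y)$ with disjoint $E_j$ of finite Haar measure, where the claim reduces to the triangle inequality $\|\sum_j F_j |E_j|\|_{\mathfrak{B}_{r,\alpha}} \leq \sum_j \|F_j\|_{\mathfrak{B}_{r,\alpha}} |E_j|$ in the Banach space $\mathfrak{B}_{r,\alpha}(\mathbb{Q}_p)$, and then approximate a general $F$ by such simple functions and pass to the limit using the completeness of $\mathfrak{B}_{r,\alpha}(\mathbb{Q}_p)$ and the $L^1$-integrability of $y \mapsto \|F(\cdot, y)\|_{\mathfrak{B}_{r,\alpha}(\mathbb{Q}_p)}$; the delicate point there is ensuring block-norm convergence via a Bochner-type construction, which in turn requires strong measurability of the $\mathfrak{B}_{r,\alpha}(\mathbb{Q}_p)$-valued map $y \mapsto F(\cdot, y)$.
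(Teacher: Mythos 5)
Your main argument is essentially the paper's own proof: the paper likewise tests $G=\int_{\mathbb{Q}_p}F(\cdot,y)\,dy$ against $g$ in the unit ball of $\widetilde{M}_{r^\prime,\alpha}(\mathbb{Q}_p)$, applies Lemma 2.1 together with Fubini, and then invokes the duality chain \eqref{clo} to conclude $G\in\mathfrak{B}_{r,\alpha}(\mathbb{Q}_p)$ with the stated norm bound. The predual identification you single out as the main obstacle is exactly the step the paper also leaves unproved (\eqref{clo} is introduced only as something one ``could expect''), so your caveat, and the self-contained simple-function alternative you sketch, are well placed.
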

\begin{proof}
Let $r^\prime$ be the conjugate of $r$ and  suppose $ g \in \widetilde{M}_{r^\prime,\alpha}(\mathbb{Q}_p)$ with $\|g\|_{\widetilde{M}_{r^\prime,\alpha}(\mathbb{Q}_p)} \leq 1$. Write  
\begin{align*}
G(x) = \int\limits_{\mathbb{Q}_p} F(x,y) dy. 
\end{align*}
By using H\"older's inequality for $M_{r^\prime,\alpha}(\mathbb{Q}_p)$ and $\mathfrak{B}_{r,\alpha}(\mathbb{Q}_p))$ we have that
\begin{align*}
\bigg|\int_{\mathbb{Q}_p} G(x)g(x)dx \bigg| & \leq \int\limits_{\mathbb{Q}_p} \int\limits_{\mathbb{Q}_p} |F(x,y)
| |g(x)| dx dy \\
& \leq \int\limits_{\mathbb{Q}_p} \|F(x, y)\|_{\mathfrak{B}_{r,\alpha}(\mathbb{Q}_p)} \|g\|_{M_{r^\prime,\alpha}(\mathbb{Q}_p)} dy \\ 
& = \int\limits_{\mathbb{Q}_p} \|F(x, y)\|_{\mathfrak{B}_{r,\alpha}(\mathbb{Q}_p)} \|g\|_{\widetilde{M}_{r^\prime,\alpha}(\mathbb{Q}_p)} dy \\
&  \leq \int\limits_{\mathbb{Q}_p} \|F(x, y)\|_{\mathfrak{B}_{r,\alpha}(\mathbb{Q}_p)} dy.
\end{align*}
Taking now on the left the supremum over $ g \in \widetilde{M}_{r^\prime,\alpha}(\mathbb{Q}_p)$ with $\|g\|_{\widetilde{M}_{r^\prime,\alpha}(\mathbb{Q}_p)} \leq 1,$ we obtain that $G \in (\widetilde{M}_{r^\prime,\alpha}(\mathbb{Q}_p))^*.$ Now by (\ref{clo}) we have $G \in \mathfrak{B}_{r,\alpha}(\mathbb{Q}_p)$ and \eqref{kow} is valid.
\end{proof}

\section{Main Results}
First, we study the dilation operators on block spaces. Let $\tau (\neq 0) \in \mathbb{Q}_p$ and for any function $f$ on $\mathbb{Q}_p,$ consider the dilation operator of the form 
\begin{equation*} \label{eq1}
(\mathcal{D}_\tau f)(x) = f(\tau x),\qquad x \in \mathbb{Q}_p.
\end{equation*}

The following theorem, which discovers the boundedness of dilation operator on $\mathfrak{B}_{r,\alpha}(\mathbb{Q}_p),$ is needed in order to prove the main result of this paper.

\begin{theorem} \label{dil}
Let $1 < r < \infty$ and $ \alpha > 0.$ Then, for all $f \in \mathfrak{B}_{r,\alpha}(\mathbb{Q}_p),$ we have
\begin{align*} 
\|\mathcal{D}_\tau f\|_{\mathfrak{B}_{r,\alpha}(\mathbb{Q}_p)} \leq 2|\tau|_p^{-(1/r+\alpha)}  \|f\|_{\mathfrak{B}_{r,\alpha}(\mathbb{Q}_p)}.
\end{align*}
\end{theorem}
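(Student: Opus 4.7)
The plan is to work from the atomic decomposition. By definition of $\mathfrak{B}_{r,\alpha}(\mathbb{Q}_p)$, every $f$ in this space can be written (essentially optimally) as $f=\sum_{k=1}^\infty \lambda_k a_k$ with each $a_k$ a central $(r,\alpha)$-block and $\sum_k|\lambda_k|$ arbitrarily close to $\|f\|_{\mathfrak{B}_{r,\alpha}(\mathbb{Q}_p)}$. Since $\mathcal{D}_\tau$ is linear, $\mathcal{D}_\tau f=\sum_k \lambda_k\mathcal{D}_\tau a_k$, so the whole argument reduces to showing that $\mathcal{D}_\tau a$ is (a small constant multiple of) a central $(r,\alpha)$-block whenever $a$ is, and tracking exactly what that constant is.

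So first I would fix a central $(r,\alpha)$-block $a$ with $\mathrm{supp}\,a\subset B^n$ and $\|a\|_{L^r(\mathbb{Q}_p)}\le |B^n|^{-\alpha}$, and write $|\tau|_p=p^{-s}$ for the (unique) integer $s$. Two computations are needed. For the support: $a(\tau x)\ne 0$ forces $|\tau|_p|x|_p\le p^n$, i.e. $|x|_p\le p^{n+s}$, so $\mathrm{supp}\,\mathcal{D}_\tau a\subset B^{n+s}$. For the norm: the change of variables $y=\tau x$ gives $dy=|\tau|_p\,dx$, hence
\begin{equation*}
\|\mathcal{D}_\tau a\|_{L^r(\mathbb{Q}_p)}^r=\int_{\mathbb{Q}_p}|a(\tau x)|^r\,dx=|\tau|_p^{-1}\int_{\mathbb{Q}_p}|a(y)|^r\,dy=|\tau|_p^{-1}\|a\|_{L^r(\mathbb{Q}_p)}^r.
\end{equation*}
Combining these with $|B^n|=p^n$ and $|B^{n+s}|=p^{n+s}$, I would compute
\begin{equation*}
\|\mathcal{D}_\tau a\|_{L^r(\mathbb{Q}_p)}\le|\tau|_p^{-1/r}|B^n|^{-\alpha}=|\tau|_p^{-(1/r+\alpha)}\,|B^{n+s}|^{-\alpha},
\end{equation*}
which identifies $|\tau|_p^{1/r+\alpha}\mathcal{D}_\tau a$ as a central $(r,\alpha)$-block supported in $B^{n+s}$.

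With this in hand, the decomposition $f=\sum_k\lambda_k a_k$ yields $\mathcal{D}_\tau f=\sum_k\bigl(|\tau|_p^{-(1/r+\alpha)}\lambda_k\bigr)\tilde a_k$, where $\tilde a_k:=|\tau|_p^{1/r+\alpha}\mathcal{D}_\tau a_k$ is itself a central $(r,\alpha)$-block. From the definition of the block norm,
\begin{equation*}
\|\mathcal{D}_\tau f\|_{\mathfrak{B}_{r,\alpha}(\mathbb{Q}_p)}\le|\tau|_p^{-(1/r+\alpha)}\sum_{k=1}^\infty|\lambda_k|,
\end{equation*}
and taking the infimum over admissible decompositions gives the desired estimate (with constant $1$, which is stronger than the stated constant $2$).

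The only genuinely delicate point is the change of variables (and being careful that $s$ really is an integer, which is automatic because the value group of $|\cdot|_p$ is $p^{\mathbb{Z}}$, so no rounding issue arises between $B^n$ and $B^{n+s}$); everything else is algebra on the exponents of $p$. I do not anticipate any obstacle, and the slack factor of $2$ appearing in the statement is simply not needed.
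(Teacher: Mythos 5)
Your proposal is correct and follows essentially the same route as the paper: decompose $f$ into central $(r,\alpha)$-blocks, check via the change of variables $y=\tau x$ that $|\tau|_p^{1/r+\alpha}\mathcal{D}_\tau a$ is again a central $(r,\alpha)$-block (supported in $\tau^{-1}B^n=B^{n+s}$), and sum the coefficients. Your observation that the constant can be taken to be $1$ rather than $2$ is also consistent with the paper's own argument, where the factor $2$ is just harmless slack left over after letting $\epsilon\to 0$.
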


\begin{proof}
Let $f \in \mathfrak{B}_{r,\alpha}(\mathbb{Q}_p),$ then by the definition of $\mathfrak{B}_{r,\alpha}(\mathbb{Q}_p)$ there exist a sequence of scalers $\{\lambda_k\}_{k \in \mathbb{N}}$ and family of central $(r, \alpha)$- blocks with the support $B^n$ for $n \in \mathbb{Z}$ such that 

\begin{equation} \label{eq2}
f = \sum_{k=1}^{\infty} \lambda_k a_k,
\end{equation}
and for any $\epsilon > 0,$
\begin{align*}
\sum_{k=1}^{\infty} |\lambda_k| < (1+\epsilon) \|f\|_{\mathfrak{B}_{r,\alpha}(\mathbb{Q}_p)}.
\end{align*}

Since $a_k$ is a central $(r, \alpha)$- block with the support $B^n,$ therefore we see that $ \mathcal{D}_\tau a_k$ is a central $(r, \alpha)$- block with the support $\tau^{-1}B^n$ and 
\begin{align*}
\|\mathcal{D}_\tau a_k\|_{L^r(\mathbb{Q}_p)} &= |\tau|_p^{-1/r} \|a_k\|_{L^r(\mathbb{Q}_p)} \\
& \leq |\tau|_p^{-1/r} |B^n|^{- \alpha} \\
& = |\tau|_p^{-1/r} |\tau|_p^{-\alpha} |\tau^{-1}B^n|^{- \alpha} \\
& = |\tau|_p^{-(1/r+\alpha)}|\tau^{-1}B^n|^{- \alpha}.
\end{align*}
From (\ref{eq2}), 
\begin{align*}
\mathcal{D}_\tau f & = \sum_{k=1}^{\infty} \lambda_k \mathcal{D}_\tau a_k \\
& = \sum_{k=1}^{\infty} |\tau|_p^{-(1/r+\alpha)}\lambda_k |\tau|_p^{(1/r+\alpha)} \mathcal{D}_\tau a_k \\
& = \sum_{k=1}^{\infty} \gamma_k b_k,
\end{align*}
where $\gamma_k = |\tau|_p^{-(1/r+\alpha)}\lambda_k$ and $b_k =|\tau|_p^{(1/r+\alpha)} \mathcal{D}_\tau a_k.$ By the definition of block spaces, we have $\mathcal{D}_\tau f \in  \mathfrak{B}_{r,\alpha}(\mathbb{Q}_p)$ and 
\begin{align*}
\|\mathcal{D}_\tau f\|_{\mathfrak{B}_{r,\alpha}(\mathbb{Q}_p)} & \leq \sum_{k=1}^{\infty} |\gamma_k| \\
& = \sum_{k=1}^{\infty}  |\tau|_p^{-(1/r+\alpha)} |\lambda_k| \\
& \leq |\tau|_p^{-(1/r+\alpha)} (1+\epsilon) \|f\|_{\mathfrak{B}_{r,\alpha}(\mathbb{Q}_p)}.
\end{align*}
As $\epsilon > 0$ was arbitrary, we obtain that
\begin{align*}
\|\mathcal{D}_\tau f\|_{\mathfrak{B}_{r,\alpha}(\mathbb{Q}_p)} \leq 2|\tau|_p^{-(1/r+\alpha)} \|f\|_{\mathfrak{B}_{r,\alpha}(\mathbb{Q}_p)}. 
\end{align*}
\end{proof}
The following $p$-adic Hardy-Hilbert type integral inequality is the main result of this paper.

\begin{theorem} \label{thm1}
Let $1 < r < \infty,~  \alpha > 0$ and let $p$-adic Hardy-Hilbert type integral operator $\mathscr{T}^p$ is defined by (\ref{eq3}). If $\mathcal{K}$ satisfies 
\begin{align} \label{eq4}
C_{r,\alpha} = 2(1-p^{-1})\sum_{k=-\infty}^{\infty}\mathcal{K}(1,p^{k}) p^{-k(1/r+\alpha-1)} < \infty.
\end{align}
Then
\begin{align*}
\|\mathscr{T}^pf\|_{\mathfrak{B}_{r,\alpha}(\mathbb{Q}^*_p)} \leq C_{r,\alpha} \|f\|_{\mathfrak{B}_{r,\alpha}(\mathbb{Q}^*_p)}, \\ \nonumber
\end{align*}
for all $f \in \mathfrak{B}_{r,\alpha}(\mathbb{Q}^*_p).$
\end{theorem}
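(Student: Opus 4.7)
The plan is to reduce $\mathscr{T}^p f$ to an integral average of dilations of $f$, and then chain together the two earlier results: Minkowski's integral inequality for $\mathfrak{B}_{r,\alpha}(\mathbb{Q}_p)$ (Theorem \ref{Min}) and the dilation bound (Theorem \ref{dil}). The hypothesis \eqref{eq4} will end up being exactly the quantitative content needed for the Minkowski step to converge.

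First I would perform the change of variable $y = xz$ (for fixed $x \in \mathbb{Q}_p^*$) in the defining integral \eqref{eq3}. Using the multiplicativity $|y|_p = |x|_p |z|_p$, the degree $-1$ homogeneity of $\mathcal{K}$, and $dy = |x|_p \, dz$, the two powers of $|x|_p$ cancel and we arrive at the identity
\begin{equation*}
\mathscr{T}^p f(x) \;=\; \int_{\mathbb{Q}_p^*} \mathcal{K}(1,|z|_p)\, (\mathcal{D}_z f)(x)\, dz,
\end{equation*}
so that the kernel becomes independent of $x$ and the $x$-dependence is entirely absorbed into the dilation operator $\mathcal{D}_z$.

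Next I would take $\mathfrak{B}_{r,\alpha}$-norms in $x$. Applying Theorem \ref{Min} to $F(x,z) = \mathcal{K}(1,|z|_p)(\mathcal{D}_z f)(x)$ gives
\begin{equation*}
\|\mathscr{T}^p f\|_{\mathfrak{B}_{r,\alpha}(\mathbb{Q}_p^*)} \;\leq\; \int_{\mathbb{Q}_p^*} \mathcal{K}(1,|z|_p)\, \|\mathcal{D}_z f\|_{\mathfrak{B}_{r,\alpha}(\mathbb{Q}_p^*)}\, dz,
\end{equation*}
and then Theorem \ref{dil} bounds the inner norm by $2|z|_p^{-(1/r+\alpha)}\|f\|_{\mathfrak{B}_{r,\alpha}(\mathbb{Q}_p^*)}$, factoring $\|f\|_{\mathfrak{B}_{r,\alpha}(\mathbb{Q}_p^*)}$ out of the integral.

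To finish, I would evaluate the remaining scalar integral by sphere decomposition $\mathbb{Q}_p^* = \bigcup_{k \in \mathbb{Z}} S^k$. Since $|z|_p$ is constant on each $S^k$ with $|S^k| = p^k(1 - p^{-1})$,
\begin{equation*}
\int_{\mathbb{Q}_p^*} \mathcal{K}(1,|z|_p)\, |z|_p^{-(1/r+\alpha)}\, dz \;=\; (1 - p^{-1}) \sum_{k=-\infty}^{\infty} \mathcal{K}(1, p^k)\, p^{-k(1/r+\alpha-1)},
\end{equation*}
which together with the factor $2$ from Theorem \ref{dil} yields exactly the constant $C_{r,\alpha}$ in \eqref{eq4}. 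The main obstacle, such as it is, is verifying that the Minkowski hypothesis $\|F(\cdot,z)\|_{\mathfrak{B}_{r,\alpha}} \in L^1(\mathbb{Q}_p^*)$ is met; but the combination of Theorem \ref{dil} and assumption \eqref{eq4} is precisely this statement, so the argument closes cleanly.
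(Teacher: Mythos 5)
Your proposal is correct and follows essentially the same route as the paper: the substitution $y = xz$, homogeneity to reduce to an average of dilations, Minkowski's inequality (Theorem \ref{Min}) followed by the dilation bound (Theorem \ref{dil}), and the sphere decomposition with $|S^k| = p^k(1-p^{-1})$ to produce $C_{r,\alpha}$. The only cosmetic difference is that the paper decomposes $\mathbb{Q}_p^*$ into spheres \emph{before} applying Minkowski (so the kernel is constant on each piece), whereas you apply Minkowski globally and decompose the resulting scalar integral at the end; the two orderings are equivalent.
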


\begin{proof}
Let $y=x\tau$ in (\ref{eq3}), then , by $dy=|x|_pd\tau$ we have
\begin{align} \nonumber
\mathscr{T}^pf(x) & = \int\limits_{\mathbb{Q}^*_p} \mathcal{K}(|x|_p, |x\tau|_p)f(\tau x)|x|_pd\tau \\ \nonumber
& =  \int\limits_{\mathbb{Q}^*_p} |x|_p^{-1}\mathcal{K}(1,|\tau|_p )f(\tau x)|x|_pd\tau \\ \nonumber
& =  \int\limits_{\mathbb{Q}^*_p}\mathcal{K}(1,|\tau|_p )f(\tau x)d\tau \\ \nonumber
& = \sum_{k=-\infty}^{\infty}~ \int\limits_{S^k} \mathcal{K}(1,|\tau|_p )\mathcal{D}_\tau f( x)d\tau \\ \label{min1}
& = \sum_{k=-\infty}^{\infty}\mathcal{K}(1,p^{k})~\int\limits_{S^k}\mathcal{D}_\tau f(x)d\tau.
\end{align} 
Let us first apply the norm $\|\cdot\|_{\mathfrak{B}_{r,\alpha}(\mathbb{Q}^*_p)}$ on both sides of \eqref{min1} and then using Theorem \ref{Min} and Theorem \ref{dil}, we get
\begin{align*}
\|\mathscr{T}^pf\|_{\mathfrak{B}_{r,\alpha}(\mathbb{Q}^*_p)} & \leq \sum_{k=-\infty}^{\infty}\mathcal{K}(1,p^{k})~\bigg\| \int\limits_{S^k}\mathcal{D}_\tau f(x)d\tau \bigg\|_{\mathfrak{B}_{r,\alpha}(\mathbb{Q}^*_p)}  \\
& \leq \sum_{k=-\infty}^{\infty}\mathcal{K}(1,p^{k})~\int\limits_{S^k}\|\mathcal{D}_\tau f\|_{\mathfrak{B}_{r,\alpha}(\mathbb{Q}^*_p)} d\tau \\
& \leq \sum_{k=-\infty}^{\infty}\mathcal{K}(1,p^{k})~\int\limits_{S^k}2|\tau|_p^{-(1/r+\alpha)} \|f\|_{\mathfrak{B}_{r,\alpha}(\mathbb{Q}^*_p)} d\tau \\
& = \sum_{k=-\infty}^{\infty}\mathcal{K}(1,p^{k}) 2 p^{-k(1/r+\alpha)} |S^k| \|f\|_{\mathfrak{B}_{r,\alpha}(\mathbb{Q}^*_p)} \\
& = \sum_{k=-\infty}^{\infty}\mathcal{K}(1,p^{k}) 2 p^{-k(1/r+\alpha)} p^{k}(1-p^{-1}) \|f\|_{\mathfrak{B}_{r,\alpha}(\mathbb{Q}^*_p)} \\
& = 2(1-p^{-1}) \sum_{k=-\infty}^{\infty}\mathcal{K}(1,p^{k}) p^{-k(1/r+\alpha-1)} \|f\|_{\mathfrak{B}_{r,\alpha}(\mathbb{Q}^*_p)} \\
& = C_{r, \alpha} \|f\|_{\mathfrak{B}_{r,\alpha}(\mathbb{Q}^*_p)}.\\
\end{align*}

Therefore (\ref{eq4}), assures that for all $f \in \mathfrak{B}_{r,\alpha}(\mathbb{Q}^*_p),$ we have
\begin{align*}
\|\mathscr{T}^pf\|_{\mathfrak{B}_{r,\alpha}(\mathbb{Q}^*_p)} \leq C_{r, \alpha} \|f\|_{\mathfrak{B}_{r,\alpha}(\mathbb{Q}^*_p)}.
\end{align*}
\end{proof}

As a consequence of Theorem \ref{thm1}, we obtain the $p$-adic Hilbert type inequality on $\mathfrak{B}_{r,\alpha}(\mathbb{Q}^*_p).$

\begin{theorem}
Let $1 < r < \infty,~ \alpha > 0$ and let $p$-adic Hilbert operator is defined by (\ref{Hilbert}). If $0< 1/r + \alpha < 1$ then there is a constant $C > 0$ such that for any $f \in \mathfrak{B}_{r,\alpha}(\mathbb{Q}^*_p)$  
\begin{align*}
\|H^pf\|_{\mathfrak{B}_{r,\alpha}(\mathbb{Q}^*_p)} \leq C \|f\|_{\mathfrak{B}_{r,\alpha}(\mathbb{Q}^*_p)}. 
\end{align*}
\end{theorem}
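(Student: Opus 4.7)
The plan is to apply Theorem \ref{thm1} directly with the kernel $\mathcal{K}(|x|_p, |y|_p) = \frac{1}{|x|_p + |y|_p}$, which is homogeneous of degree $-1$. The only nontrivial task is to verify the convergence of the series
\begin{align*}
C_{r,\alpha} = 2(1-p^{-1}) \sum_{k=-\infty}^{\infty} \frac{1}{1+p^k}\, p^{-k(1/r+\alpha-1)}
\end{align*}
under the hypothesis $0 < 1/r + \alpha < 1$. Once this constant is finite, Theorem \ref{thm1} immediately yields the desired bound with $C = C_{r,\alpha}$.

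For the convergence, I would set $\beta = 1/r + \alpha \in (0,1)$ and split the sum into the ranges $k \geq 0$ and $k < 0$. For $k \geq 0$, I bound $1+p^k \geq p^k$, so the tail is dominated by
\begin{align*}
\sum_{k=0}^{\infty} p^{-k} \, p^{-k(\beta-1)} = \sum_{k=0}^{\infty} p^{-k\beta},
\end{align*}
which is a convergent geometric series since $\beta > 0$. For $k < 0$, writing $j = -k \geq 1$ and using $1+p^k \geq 1$, the tail is dominated by
\begin{align*}
\sum_{j=1}^{\infty} p^{\,j(\beta - 1)} = \sum_{j=1}^{\infty} p^{-j(1-\beta)},
\end{align*}
which converges since $1-\beta > 0$. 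Combining both tails gives $C_{r,\alpha} < \infty$.

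With $C_{r,\alpha}$ finite, the hypotheses of Theorem \ref{thm1} are satisfied for the Hilbert kernel, and the conclusion $\|H^p f\|_{\mathfrak{B}_{r,\alpha}(\mathbb{Q}_p^*)} \leq C_{r,\alpha} \|f\|_{\mathfrak{B}_{r,\alpha}(\mathbb{Q}_p^*)}$ follows. There is no real obstacle here; the two-sided constraint $0 < 1/r + \alpha < 1$ is precisely what is needed so that both the $k\to+\infty$ and $k\to-\infty$ tails of the defining series are summable, and the whole argument reduces to recognizing this as the correct specialization of the general theorem.
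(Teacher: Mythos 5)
Your proposal is correct and follows essentially the same route as the paper: both specialize Theorem \ref{thm1} to the Hilbert kernel and verify that $C_{r,\alpha}<\infty$ by checking that the two tails of the series are geometric under $0<1/r+\alpha<1$ (the paper carries out the exact rewriting of the sum, while you dominate each tail by a geometric series, but the substance is identical).
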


\begin{proof}
Let $\mathcal{K}(|x|_p,|y|_p) = \dfrac{1}{|x|_p~+~|y|_p}.$ Notice that, $\mathcal{K}$ is a nonnegative homogeneous function of degree $-1,$ and 
\begin{align*}
C_{r,\alpha} & = 2(1-p^{-1})\sum_{k=-\infty}^{\infty}\mathcal{K}(1,p^{k}) p^{-k(1/r+\alpha-1)} \\
& = 2(1-p^{-1})\bigg(\sum_{k=-\infty}^{0}\mathcal{K}(1,p^{k}) p^{-k(1/r+\alpha-1)} ~ + ~ \sum_{k=1}^{\infty}\mathcal{K}(1,p^{k}) p^{-k(1/r+\alpha-1)} \bigg) \\
& = 2(1-p^{-1})\bigg(\mathcal{K}(1,1)~+~ \sum_{k=-\infty}^{-1}\mathcal{K}(1,p^{k}) p^{-k(1/r+\alpha-1)} ~ + ~ \sum_{k=1}^{\infty}\mathcal{K}(1,p^{k}) p^{-k(1/r+\alpha-1)} \bigg) \\
& = 2(1-p^{-1})\bigg(\dfrac{1}{2}~+~ \sum_{k=1}^{\infty}\mathcal{K}(1,p^{-k}) p^{k(1/r+\alpha-1)} ~ + ~ \sum_{k=1}^{\infty}\mathcal{K}(1,p^{k}) p^{-k(1/r+\alpha-1)} \bigg) \\
& = 2(1-p^{-1})\bigg(\dfrac{1}{2}~+~ \sum_{k=1}^{\infty} \bigg[\dfrac{p^kp^{k(1/r+\alpha-1)}}{p^k+1} ~ + ~ \dfrac{p^{-k(1/r+\alpha-1)}}{1+p^{k}}\bigg] \bigg) \\
& = 2(1-p^{-1})\bigg(\dfrac{1}{2}~+~ \sum_{k=1}^{\infty} \dfrac{1}{1+p^k} ( p^{k(1/r+\alpha-1+1)}~ + ~ p^{-k(1/r+\alpha-1)}) \bigg),
\end{align*}
since we have $0< 1/r + \alpha < 1,$ which implies that $C_{r,\alpha} < \infty,$ and therefore Theorem \ref{thm1} gives the $p$-adic Hilbert type inequality on $\mathfrak{B}_{r,\alpha}(\mathbb{Q}^*_p).$
\end{proof}

We also obtain the $p$-adic Hardy type inequality on $\mathfrak{B}_{r,\alpha}(\mathbb{Q}^*_p).$

\begin{theorem}
Let $1 < r < \infty,~  \alpha > 0$ and let $p$-adic Hardy operator is defined by (\ref{Hardy}). If $ 0< 1/r + \alpha < 1$ there is a constant $C > 0$ such that for any $f \in \mathfrak{B}_{r,\alpha}(\mathbb{Q}^*_p)$  
\begin{align*}
\|\mathcal{H}^pf\|_{\mathfrak{B}_{r,\alpha}(\mathbb{Q}^*_p)} \leq C \|f\|_{\mathfrak{B}_{r,\alpha}(\mathbb{Q}^*_p)}. 
\end{align*}
\end{theorem}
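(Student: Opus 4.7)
The plan is to mimic the derivation just used for the $p$-adic Hilbert inequality: specialize the kernel $\mathcal{K}$ to the one giving the $p$-adic Hardy operator $\mathcal{H}^p$, verify the homogeneity assumption, reduce the constant $C_{r,\alpha}$ in \eqref{eq4} to an explicit one-sided geometric series, and then invoke Theorem \ref{thm1}.

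First I would set $\mathcal{K}(|x|_p,|y|_p)=|x|_p^{-1}\Phi_E(|y|_p)$ with $E=\{y\in\mathbb{Q}_p:|y|_p\leq|x|_p\}$, which is exactly the kernel whose operator \eqref{eq3} recovers $\mathcal{H}^pf(x)$ from \eqref{Hardy}. A quick check shows this kernel is nonnegative and homogeneous of degree $-1$: for any $\xi>0$ the factor $|x|_p^{-1}$ scales as $\xi^{-1}|x|_p^{-1}$, and the condition $\xi|y|_p\leq\xi|x|_p$ is equivalent to $|y|_p\leq|x|_p$, so the indicator is unchanged. Hence Theorem \ref{thm1} applies provided the series \eqref{eq4} converges.

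Next I would evaluate $\mathcal{K}(1,p^k)$: it equals $1$ exactly when $p^k\leq 1$, i.e.\ when $k\leq 0$, and vanishes for $k\geq 1$. Therefore the two-sided sum in \eqref{eq4} collapses to the one-sided sum
\begin{align*}
C_{r,\alpha}=2(1-p^{-1})\sum_{k=-\infty}^{0}p^{-k(1/r+\alpha-1)}=2(1-p^{-1})\sum_{j=0}^{\infty}p^{j(1/r+\alpha-1)},
\end{align*}
after the substitution $j=-k$. This is a geometric series with ratio $p^{1/r+\alpha-1}$, and it converges precisely when $1/r+\alpha-1<0$, which is guaranteed by the hypothesis $1/r+\alpha<1$ (the lower bound $0<1/r+\alpha$ is automatic since $r>1$ and $\alpha>0$). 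Summing gives the finite constant $C_{r,\alpha}=2(1-p^{-1})(1-p^{1/r+\alpha-1})^{-1}$.

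Finally, with $C_{r,\alpha}<\infty$ established, Theorem \ref{thm1} immediately yields
\begin{align*}
\|\mathcal{H}^pf\|_{\mathfrak{B}_{r,\alpha}(\mathbb{Q}^*_p)}\leq C_{r,\alpha}\|f\|_{\mathfrak{B}_{r,\alpha}(\mathbb{Q}^*_p)},
\end{align*}
for all $f\in\mathfrak{B}_{r,\alpha}(\mathbb{Q}^*_p)$, which is the asserted inequality with $C=C_{r,\alpha}$. There is essentially no obstacle here; the only minor care is in correctly handling the indicator so that the two-sided sum over $k\in\mathbb{Z}$ truncates to nonpositive $k$, after which the convergence is a standard geometric series condition.
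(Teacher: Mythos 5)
Your proposal is correct and follows essentially the same route as the paper: specialize the kernel to $\mathcal{K}(s,t)=s^{-1}\mathbf{1}_{\{t\leq s\}}$, observe that $\mathcal{K}(1,p^k)$ vanishes for $k\geq 1$ so that \eqref{eq4} truncates to a geometric series over $k\leq 0$ convergent under $1/r+\alpha<1$, and then apply Theorem \ref{thm1}. The only (harmless) addition is that you record the closed-form value of $C_{r,\alpha}$, which the paper does not.
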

\begin{proof}
Let $\mathcal{K}(|x|_p,|y|_p) = |x|_p^{-1} \Phi_E(|y|_p),$ where $\Phi_E$ is the characteristic function of $E = \{ y \in \mathbb{Q}_p : |y|_p \leq |x|_p\},$ and it is obviously a nonnegative homogeneous function of degree $-1.$ Moreover, we have 
\begin{align*}
C_{r,\alpha} & = 2(1-p^{-1})\sum_{k=-\infty}^{\infty}\mathcal{K}(1,p^{k}) p^{-k(1/r+\alpha-1)} \\
& = 2(1-p^{-1}) \sum_{k=- \infty}^{0} p^{-k(1/r+\alpha-1)}\\
& = 2(1-p^{-1}) \bigg( 1 +  \sum_{k=1}^{\infty} p^{k(1/r+\alpha-1)} \bigg) < \infty,~~~~~~~~~(\because 1/r + \alpha < 1 ).\\
\end{align*}
Hence, according to Theorem \ref{thm1}, we have $p$-adic Hardy type inequality on $\mathfrak{B}_{r,\alpha}(\mathbb{Q}^*_p).$
\end{proof}

\begin{theorem}
Let $1 < r < \infty,~ \alpha > 0$ and let the operator $\mathscr{D}^p$ is defined by (\ref{polya}). If $-\frac{\lambda}{2} < 1/r+\alpha < \frac{\lambda}{2}+1$ then there is a constant $C > 0$ such that for any $f \in \mathfrak{B}_{r,\alpha}(\mathbb{Q}^*_p)$  

\begin{align*}
\|\mathscr{D}^pf\|_{\mathfrak{B}_{r,\alpha}(\mathbb{Q}^*_p)} \leq C \|f\|_{\mathfrak{B}_{r,\alpha}(\mathbb{Q}^*_p)}. 
\end{align*}
\end{theorem}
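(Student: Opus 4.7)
The plan is to apply Theorem \ref{thm1} directly with the kernel
$$\mathcal{K}(|x|_p,|y|_p) = \frac{(|x|_p|y|_p)^{\lambda/2}}{\max\{|x|_p,|y|_p\}^{\lambda+1}}.$$
First I would verify that $\mathcal{K}$ is nonnegative and homogeneous of degree $-1$: for $\xi > 0$, $(\xi x)(\xi y) = \xi^2 xy$ and $\max\{\xi x,\xi y\} = \xi\max\{x,y\}$, so $\mathcal{K}(\xi x,\xi y) = \xi^{\lambda-(\lambda+1)}\mathcal{K}(x,y) = \xi^{-1}\mathcal{K}(x,y)$, as required.

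The substance of the argument is to check that the constant $C_{r,\alpha}$ in (\ref{eq4}) is finite under the hypothesis $-\lambda/2 < 1/r + \alpha < \lambda/2 + 1$. To this end I would evaluate $\mathcal{K}(1,p^{k})$ by splitting at $k=0$: when $k \leq 0$, $\max\{1,p^k\} = 1$ gives $\mathcal{K}(1,p^k) = p^{k\lambda/2}$; when $k \geq 1$, $\max\{1,p^k\} = p^k$ gives $\mathcal{K}(1,p^k) = p^{-k(\lambda/2+1)}$. Substituting these into (\ref{eq4}), the series decouples into two geometric tails,
$$\sum_{k=-\infty}^{0} p^{\,k(\lambda/2 + 1 - 1/r - \alpha)} \quad \text{and} \quad \sum_{k=1}^{\infty} p^{-k(\lambda/2 + 1/r + \alpha)}.$$

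Convergence of the first sum as $k \to -\infty$ requires $\lambda/2 + 1 - 1/r - \alpha > 0$, i.e., $1/r + \alpha < \lambda/2 + 1$; convergence of the second as $k \to +\infty$ requires $\lambda/2 + 1/r + \alpha > 0$, i.e., $-\lambda/2 < 1/r + \alpha$. Both inequalities are precisely the hypothesis, so $C_{r,\alpha} < \infty$, and Theorem \ref{thm1} yields $\|\mathscr{D}^p f\|_{\mathfrak{B}_{r,\alpha}(\mathbb{Q}^*_p)} \leq C_{r,\alpha}\|f\|_{\mathfrak{B}_{r,\alpha}(\mathbb{Q}^*_p)}$. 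The computation mirrors the Hilbert and Hardy cases already treated, so I do not expect any real obstacle; the only care needed is in tracking the two exponents, and the two-sided interval on $1/r+\alpha$ reflects the symmetric small-$k$ versus large-$k$ decay behaviour of the kernel.
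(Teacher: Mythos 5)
Your proposal is correct and follows essentially the same route as the paper: verify homogeneity of degree $-1$, compute $\mathcal{K}(1,p^k)$ by splitting at $k=0$, and check that the two resulting geometric series converge exactly under $-\frac{\lambda}{2} < 1/r+\alpha < \frac{\lambda}{2}+1$ before invoking Theorem \ref{thm1}. The exponents $p^{k(\lambda/2+1-1/r-\alpha)}$ and $p^{-k(\lambda/2+1/r+\alpha)}$ match the paper's computation.
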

\begin{proof}
Let

\begin{align} \label{kernel}
\mathcal{K}(|x|_p,|y|_p) = \dfrac{(|x|_p|y|_p)^{\frac{\lambda}{2}}}{\text{max}\{|x|_p,|y|_p\}^{\lambda+1}},~~~~~\lambda \geq 0.
\end{align}

We find that $\mathcal{K}$ is a nonnegative homogeneous function of degree $-1,$ and
 \begin{align*}
C_{r,\alpha} & = 2(1-p^{-1})\sum_{k=-\infty}^{\infty}\mathcal{K}(1,p^{k}) p^{-k(1/r+\alpha-1)} \\
& = 2(1-p^{-1})\bigg(\sum_{k=-\infty}^{0}\mathcal{K}(1,p^{k}) p^{-k(1/r+\alpha-1)} ~ + ~ \sum_{k=1}^{\infty}\mathcal{K}(1,p^{k}) p^{-k(1/r+\alpha-1)} \bigg) \\
& = 2(1-p^{-1})\bigg(\mathcal{K}(1,1)~+~ \sum_{k=1}^{\infty}\mathcal{K}(1,p^{-k}) p^{k(1/r+\alpha-1)} ~ + ~ \sum_{k=1}^{\infty}\mathcal{K}(1,p^{k}) p^{-k(1/r+\alpha-1)} \bigg) \\
& = 2(1-p^{-1})\bigg(1~+~ \sum_{k=1}^{\infty}p^{-\frac{k\lambda}{2}}p^{k(1/r+\alpha-1)} ~ + ~ \sum_{k=1}^{\infty}p^{-k(\frac{\lambda}{2}+1)} p^{-k(1/r+\alpha-1)} \bigg) \\
& = 2(1-p^{-1})\bigg(1~+~ \sum_{k=1}^{\infty} \bigg[ p^{k(1/r+\alpha-1-\frac{\lambda}{2})} ~ + ~ p^{-k(1/r+\alpha-1+\frac{\lambda}{2}+1)}\bigg] \bigg) < \infty,~~~~(\because -\frac{\lambda}{2} < 1/r+\alpha < \frac{\lambda}{2}+1). 
\end{align*}

Consequently, the boundedness of $\mathscr{D}^p$ on $\mathfrak{B}_{r,\alpha}(\mathbb{Q}^*_p)$ is assured by Theorem \ref{thm1}.
\end{proof}

\begin{remark} Taking $\lambda =0 $ in kernal (\ref{kernel}), we get the p-adic Hardy-Littlewood- P\'{o}lya inequality  on $\mathfrak{B}_{r,\alpha}(\mathbb{Q}^*_p)$ as follows:
\begin{align*}
\|\mathscr{P}^pf\|_{\mathfrak{B}_{r,\alpha}(\mathbb{Q}^*_p)} \leq C \|f\|_{\mathfrak{B}_{r,\alpha}(\mathbb{Q}^*_p)}. 
\end{align*}
\end{remark}


\bibliographystyle{amsplain}

\end{document}